\newtheorem{theorem}{Theorem}
{}
\newtheorem{corollary}{Corollary}
{}
\newtheorem{definition}{Definition}
{}
\newtheorem{remark}{Remark}
{}
\theoremstyle{plain}
{}
\begin{document}
	\begin{center}
		{\Large \bf{Quaternionic $\left( 1,3\right) -$%
				Bertrand Curves According to Type 2-Quaternionic Frame in $\mathbb{R}^{4}$ }}
	\end{center}
	\centerline{\large Ferdag KAHRAMAN AKSOYAK  {\footnotetext{
				{E-mail: ferdag.aksoyak@ahievran.edu.tr(F. Kahraman Aksoyak) }} }}
	
	\centerline{\it Ahi Evran University, Division of Elementary Mathematics Education
		Kirsehir, Turkey}

\begin{abstract}
	If there exists a quaternionic Bertrand curve in $\mathbb{E}^{4}$, then its
	torsion or bitorsion vanishes. So we can say that there is no quaternionic
	Bertrand curves whose torsion and bitorsion are non-zero. Hence by using the
	method which is given by Matsuda and Yorozu \cite{mat},\ we give the
	definition of quaternionic $(1,3)-$Bertrand curve according to Type
	2-Quaternionic Frame and obtain some results about these curves..
\end{abstract}

\begin{quote}\small
	{\it{Key words}: Quaternions, Quaternionic frame, Bertrand curve, Euclidean space.}
\end{quote}
\begin{quote}\small
	2010 \textit{Mathematics Subject Classification}: 53A04, 11R52 .
\end{quote}

\section{Introduction}

Bertrand curve was introduced by \textit{Bertrand }in 1850 (see \cite{bert}%
). When a curve is given, if there exists a second curve whose the principal
normal is the principal normal of that curve, then the first curve is called
Bertrand curve and the second curve is called the Bertrand mate of the first
curve. The most important properties of Bertrand curves in Euclidean 3-space
are that the distance between corresponding points is constant and there is
a linear relation between the curvature functions of the first curve, that
is, for $\lambda ,\mu $ $\in \mathbb{R}$, $\lambda \kappa +\mu \tau =1$,
where $\kappa $ is curvature and $\tau $ is the torsion of the first curve.
Also the absolute value of the real number $\lambda $ in this linear
relation is equal to the distance between corresponding points of Bertrand
curves. The Bertrand curves in Euclidean 3-space was extended by L. R. Pears
to Riemannian $n-$space and gave general results for Bertrand curves \cite%
{Pears}. If these general results were applied to Euclidean $n-$space, then
either torsion or bitorsion of the curve vanishes. Otherwise, for $n\geq 4,$
then no special Frenet curve in $\mathbb{E}^{n}$ is a Bertrand curve \cite%
{mat}. Hence, Matsuda and Yorozu gave a new definition of Bertrand curve
which is called $(1,3)-$Bertrand curve and obtain a characterization of $%
(1,3)-$Bertrand curve. \cite{mat}. After then many researchers have made a
lot of papers about $(1,3)-$Bertrand curves \cite{er},\  \cite{kaz4}, \cite%
{kaz5}, \cite{kaz1}, \cite{kaz2}, \cite{kaz3}.

In 1987, Bharathi and Nagaraj introduced the Serret-Frenet formulas for
spatial quaternionic curves in $\mathbb{R}^{3}$ and quaternionic curves in $%
\mathbb{R}^{4}$ \cite{nag}. Since the quaternionic multiplication of two
orthogonal vectors in $\mathbb{R}^{3}$ becomes vector product of these
vectors, they reconsider the Serret- Frenet formulae of any curve in $%
\mathbb{R}^{3}$ which is well known in differential geometry by using the
quaternionic multiplication and then they compose the Serret- Frenet
formulae of quaternionic curves in $\mathbb{R}^{4}$ by means of the the
Serret-Frenet formulas of spatial quaternionic curves in $\mathbb{R}^{3}$ 
\cite{nag}. After then various studies have been carried out on the
adaptation of some special curves to quaternionic curves \cite{koca},\  \cite%
{gok1}, \cite{gun}, \cite{mak}, \cite{kara}, \cite{on}, \cite{gunay}, \cite%
{yil}, \cite{yoon1}, \cite{yoon}. Ke\c{c}ilio\u{g}lu and \.{I}larslan
defined $(1,3)-$Bertrand curves for quaternionic curves in Euclidean 4-space
and obtained a characterization for such curves \cite{kaz}.

Also, Kahraman Aksoyak defined a new type of quaternionic frame for
quaternionic curves in Euclidean 4- space which is called Type
2-Quaternionic Frame \cite{ak}.

In this paper, by using the method which is given by Matsuda and Yorozu \cite%
{mat},\ we give the definition of quaternionic $(1,3)-$Bertrand curve
according to Type 2-Quaternionic Frame and obtain some results about these
curves.

\section{Preliminaries}

A real quaternion is defined as: 
\begin{equation*}
q=q_{0}+q_{1}e_{1}+q_{2}e_{2}+q_{3}e_{3}
\end{equation*}%
where $q_{t}\in \mathbb{R}$ for $0\leq t\leq 3$ and $e_{1},$ $e_{2},$ $e_{3}$
are unit vectors in usual three dimensional real vector space. Any
quaternion $q$\ can be divided into two parts such that the scalar part
denoted by $S_{q}$ and the vectorial part denoted by $V_{q}$, where $%
S_{q}=q_{0}$ and $V_{q}=q_{1}e_{1}+q_{2}e_{2}+q_{3}e_{3}$. So, we can
rewrite any real quaternion as $q=S_{q}+V_{q}$. If $q=S_{q}+V_{q}$ and $%
q^{\prime }=S_{q^{\prime }}+V_{q^{\prime }}$ are any two quaternions,
addition, the multiplication by a real scalar $c$\ and the conjugate of $q$
denoted by $\gamma q$ are defined as, respectively: 
\begin{eqnarray*}
	q+q^{\prime } &=&\left( S_{q}+S_{q^{\prime }}\right) +\left(
	V_{q}+V_{q^{\prime }}\right) \\
	cq &=&cS_{q}+cV_{q} \\
	\gamma q &=&S_{q}-V_{q}
\end{eqnarray*}%
Let denote the set of quaternions by $Q.$ $Q$ is a real vector space
according to this addition and scalar multiplication. A basis of this vector
space is $\left \{ 1,\text{ }e_{1},\text{ }e_{2},\text{ }e_{3}\right \} $
and it is a four dimensional vector space. Hence we can think of any
quaternion $q$ as an element $\left( q_{0},q_{1},q_{2},q_{3}\right) $ of $%
\mathbb{R}^{4}. $ Even a quaternion whose the scalar part is zero (it is
called spatial quaternion) can be considered as a ordered triple $\left(
q_{1},q_{2},q_{3}\right) $ of $\mathbb{R}^{3}.$

The product of two quaternions is defined by means of the multiplication
rule between the units $e_{1},$ $e_{2},$ $e_{3}$ are given by: 
\begin{equation}
e_{1}^{2}=e_{2}^{2}=e_{3}^{2}=e_{1}e_{2}e_{3}=-1
\end{equation}

So, by using (1), quaternionic multiplication is obtained as:%
\begin{equation*}
q\times q^{\prime }=S_{q}S_{q^{\prime }}-\langle V_{q},V_{q^{\prime
}}\rangle +S_{q}V_{q^{\prime }}+S_{q^{\prime }}V_{q}+V_{q}\wedge
V_{q^{\prime }}\text{ for every }q,\text{ }q^{\prime }\in Q,
\end{equation*}%
where $\left \langle ,\right \rangle $ and $\wedge $ denote the inner
product and cross products in $\mathbb{R}^{3},$ respectively$.$ The
quaternion multiplication is associative and distributed but
non-commutative. So $Q$ is a real algebra and it is called quaternion
algebra.

Now, the symetric, non-degenerate, bilinear form $h$ on $Q$ is defined as :%
\begin{equation*}
h:Q\times Q\rightarrow \mathbb{R}\text{,}
\end{equation*}%
\begin{equation*}
h(q,q^{\prime })=\frac{1}{2}(q\times \gamma q^{\prime }+q^{\prime }\times
\gamma q)\text{ for }q,\text{ }q^{\prime }\in Q
\end{equation*}%
and the norm of any real quaternion $q$ is determined as: 
\begin{equation*}
\left \Vert q\right \Vert ^{2}=h(q,q)=q\times \gamma q=S_{q}^{2}+\left
\langle V_{q},V_{q}\right \rangle .
\end{equation*}%
So the mapping $h$ is called the quaternion (or Euclidean) inner product 
\cite{nag}.

In this paper, a quaternionic curve in $\mathbb{R}^{4}$ is denoted by $%
\alpha ^{\left( 4\right) }$\ and the spatial quaternionic curve in $\mathbb{R%
}^{3}$ associated with $\alpha ^{\left( 4\right) }$ in $\mathbb{R}^{4}$ is
denoted by $\alpha .$ Type 2-Quaternionic Frame for a quaternionic curve in $%
\mathbb{R}^{4}$ is defined as\cite{ak}:

\begin{theorem}
	\label{teo 2.1}Let $I=\left[ 0,1\right] $ denote the unit interval in the
	real line $\mathbb{R}$ and $S$ be the set of spatial quaternionic curve%
	\begin{equation*}
	\alpha :I\subset \mathbb{R}\longrightarrow S,
	\end{equation*}%
	\begin{equation*}
	\text{ \  \  \  \  \  \  \  \  \  \  \  \  \  \  \  \  \  \  \  \  \  \  \  \  \  \  \  \ }%
	s\longrightarrow \alpha (s)=\alpha _{1}(s)e_{1}+\alpha _{2}(s)e_{2}+\alpha
	_{3}(s)e_{3}
	\end{equation*}%
	be an arc-lenghted curve. Then the Frenet equations of $\alpha $\ are as
	follows:%
	\begin{equation*}
	\left[ 
	\begin{array}{c}
	t^{\prime } \\ 
	n^{\prime } \\ 
	b^{\prime }%
	\end{array}%
	\right] =\left[ 
	\begin{array}{ccc}
	0 & k & 0 \\ 
	-k & 0 & r \\ 
	0 & -r & 0%
	\end{array}%
	\right] \left[ 
	\begin{array}{c}
	t \\ 
	n \\ 
	b%
	\end{array}%
	\right]
	\end{equation*}%
	where $t=\alpha ^{^{\prime }}$ is unit tangent, $n$ is unit principal
	normal, $b=t\times n$\ is binormal, where $\times $ denotes the quaternion
	product. $k=\left \Vert t^{\prime }\right \Vert $ is the principal curvature
	and $r$ is the torsion of the curve $\gamma .$ Morever these Frenet vectors
	hold the following equations:%
	\begin{eqnarray*}
		h(t,t) &=&h(n,n)=h(b,b)=1, \\
		h(t,n) &=&h(t,b)=h(n,b)=0.
	\end{eqnarray*}
\end{theorem}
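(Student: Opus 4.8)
The plan is to reproduce the classical derivation of the Serret--Frenet formulas, the only novelty being that the binormal is built from the quaternion product, so one must check this causes no harm. Since $\alpha$ is arc-lengthed, $t=\alpha'$ satisfies $h(t,t)=1$; differentiating and using that $h$ is bilinear gives $h(t',t)=0$. Put $k=\|t'\|$ and assume (as is standard for a Frenet curve) that $k>0$ along $\alpha$; then $n:=t'/k$ is well defined, $t'=kn$, $h(n,n)=1$ and $h(t,n)=0$. This is the first row of the matrix.

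The one genuinely quaternionic step is to examine $b=t\times n$. Because $t$ and $n$ are spatial quaternions, $S_t=S_n=0$, and $h(t,n)=0$ forces $\langle V_t,V_n\rangle=0$, so in the multiplication formula $q\times q'=S_qS_{q'}-\langle V_q,V_{q'}\rangle+S_qV_{q'}+S_{q'}V_q+V_q\wedge V_{q'}$ every term drops out except the last: $b=V_t\wedge V_n$. In particular $b$ is again a spatial quaternion, so that $h$, restricted to $\{t,n,b\}$ and their derivatives, is just the Euclidean inner product on $\mathbb{R}^{3}$; and since $t,n$ are orthonormal, the vector $V_t\wedge V_n$ is a unit vector orthogonal to both, i.e. $h(b,b)=1$ and $h(t,b)=h(n,b)=0$. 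Thus $\{t,n,b\}$ is an $h$-orthonormal moving frame along $\alpha$.

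It remains to read off $n'$ and $b'$, which now follows purely from orthonormality. Expanding in the frame, $n'=h(n',t)\,t+h(n',n)\,n+h(n',b)\,b$; differentiating $h(n,t)=0$ and $h(n,n)=1$ gives $h(n',t)=-h(n,t')=-k$ and $h(n',n)=0$, and naming the remaining coefficient $r:=h(n',b)$ (the torsion) yields $n'=-kt+rb$. Likewise $b'=h(b',t)\,t+h(b',n)\,n+h(b',b)\,b$; differentiating $h(b,t)=0$, $h(b,n)=0$, $h(b,b)=1$ gives $h(b',t)=-h(b,t')=-k\,h(b,n)=0$, $h(b',n)=-h(b,n')=-r$ and $h(b',b)=0$, hence $b'=-rn$. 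Stacking the three rows produces the asserted matrix, and all the orthonormality relations were established along the way.

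I do not anticipate a real obstacle: the computation is the textbook one, carried out in the bilinear form $h$ instead of the dot product. The only place that requires attention is the reduction $t\times n=V_t\wedge V_n$ for the orthonormal spatial pair $t,n$ — this is what guarantees that the quaternion-defined binormal is a bona fide unit spatial quaternion perpendicular to $t$ and $n$, and it is exactly the point where the quaternionic structure enters. One should also keep in mind the implicit regularity hypotheses ($\alpha$ smooth, unit speed, $k>0$) under which $n$, $b$, and the equations make sense.
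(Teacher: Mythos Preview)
Your argument is correct and is essentially the standard Serret--Frenet derivation, with the one quaternionic subtlety (that for orthonormal spatial quaternions $t,n$ the product $t\times n$ collapses to the ordinary cross product $V_t\wedge V_n$) handled cleanly. Note, however, that the paper does not supply its own proof of this statement: Theorem~\ref{teo 2.1} is quoted as a preliminary from \cite{nag} (and \cite{ak}), so there is no in-paper proof to compare against; your write-up simply reconstructs the classical argument that underlies the cited result.
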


\begin{theorem}
	\label{teo 2.2 copy(1)}Let $I=\left[ 0,1\right] $ denote the unit interval
	in the real line $\mathbb{R}$ and 
	\begin{equation*}
	\alpha ^{\left( 4\right) }:I\subset \mathbb{R}\longrightarrow Q,
	\end{equation*}%
	\begin{equation*}
	\text{ \  \  \  \  \  \  \  \  \  \  \  \  \  \  \  \  \  \  \  \  \  \  \  \  \  \  \  \  \  \  \  \  \  \  \
		\  \  \  \  \  \  \  \  \  \  \ }s\longrightarrow \alpha ^{\left( 4\right) }(s)=\alpha
	_{0}^{\left( 4\right) }(s)+\alpha _{1}^{\left( 4\right) }(s)e_{1}+\alpha
	_{2}^{\left( 4\right) }(s)e_{2}+\alpha _{3}^{\left( 4\right) }(s)e_{3}
	\end{equation*}%
	be an arc-length curve in $\mathbb{R}^{4}$. Then Frenet equations of\ $%
	\alpha ^{\left( 4\right) }$ are given by%
	\begin{equation}
	\left[ 
	\begin{array}{c}
	T^{^{\prime }} \\ 
	N_{1}^{^{\prime }} \\ 
	N_{2}^{\prime } \\ 
	N_{3}^{\prime }%
	\end{array}%
	\right] =\left[ 
	\begin{array}{cccc}
	0 & K & 0 & 0 \\ 
	-K & 0 & -r & 0 \\ 
	0 & r & 0 & (K-k) \\ 
	0 & 0 & -(K-k) & 0%
	\end{array}%
	\right] \left[ 
	\begin{array}{c}
	T \\ 
	N_{1} \\ 
	N_{2} \\ 
	N_{3}%
	\end{array}%
	\right]
	\end{equation}%
	where $T=\frac{d\alpha ^{\left( 4\right) }}{ds},$ $N_{1},$ $N_{2},$ $N_{3}$
	are the Frenet vectors of the curve $\alpha ^{\left( 4\right) }$ and $%
	K=\left \Vert T^{^{\prime }}\right \Vert $ is the principal curvature, $-r$
	is the torsion and $(K-k)$ is the bitorsion of the curve $\alpha ^{\left(
		4\right) }$. There exists following relation between the Frenet vectors of $%
	\alpha ^{\left( 4\right) }$ and the Frenet vectors of $\alpha $%
	\begin{equation*}
	N_{1}\left( s\right) =b(s)\times T(s),\text{\ }N_{2}\left( s\right)
	=n(s)\times T(s),\text{ }N_{3}\left( s\right) =t(s)\times T(s)
	\end{equation*}%
	and these Frenet vectors satisfy the following equations:%
	\begin{eqnarray*}
		h(T,T) &=&h(N_{1},N_{1})=h(N_{2},N_{2})=h(N_{3},N_{3})=1, \\
		h(T,N_{1})
		&=&h(T,N_{2})=h(T,N_{3})=h(N_{1},N_{2})=h(N_{1},N_{3})=h(N_{2},N_{3})=0.
	\end{eqnarray*}
\end{theorem}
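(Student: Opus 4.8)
The plan is to follow the scheme of Bharathi and Nagaraj \cite{nag}, adapted to the Type~2 ordering as in \cite{ak}: write down four explicit vector fields along $\alpha^{(4)}$, check that they form an orthonormal frame, differentiate them, and read off the connection matrix. Concretely, I would let $\{t,n,b\}$, $k$, $r$ be the Frenet apparatus of the associated spatial quaternionic curve $\alpha$ from Theorem~\ref{teo 2.1}, set $T=d\alpha^{(4)}/ds$ and $K=\|T'\|$, and put $N_{1}=b\times T$, $N_{2}=n\times T$, $N_{3}=t\times T$, all products being quaternion products.

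First I would check orthonormality. The only facts needed about $h$ and the quaternion product are that conjugation reverses products, $\gamma(p\times q)=\gamma q\times\gamma p$, and that $T\times\gamma T=\|T\|^{2}=1$ since $\alpha^{(4)}$ is unit speed. From these, for spatial quaternions $x,y$ one gets
\begin{equation*}
h(x\times T,\,y\times T)=\|T\|^{2}h(x,y)=h(x,y),\qquad h(T,\,x\times T)=S_{x}=0 .
\end{equation*}
Applying this to $x,y\in\{t,n,b\}$, and using that $\{t,n,b\}$ is orthonormal with $S_{t}=S_{n}=S_{b}=0$ (Theorem~\ref{teo 2.1}) together with $h(T,T)=1$, shows that $\{T,N_{1},N_{2},N_{3}\}$ is an orthonormal frame along $\alpha^{(4)}$.

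Next I would derive the Frenet equations by differentiation. The structural input coming from the construction of the associated curve is $T'=K\,b\times T=KN_{1}$. For the other three fields one uses $t'=kn$, $n'=-kt+rb$, $b'=-rn$ from Theorem~\ref{teo 2.1}, associativity of the quaternion product, and the products of mutually orthogonal unit spatial quaternions $b\times b=-1$, $n\times b=t$, $t\times b=-n$. For example,
\begin{equation*}
N_{3}'=t'\times T+t\times T'=k\,n\times T+K(t\times b)\times T=kN_{2}-KN_{2}=-(K-k)N_{2},
\end{equation*}
and in the same way $N_{1}'=-KT-rN_{2}$ and $N_{2}'=rN_{1}+(K-k)N_{3}$. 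This reproduces exactly the connection matrix in the statement; since a unit-speed curve in $\mathbb{E}^{4}$ with linearly independent first three derivatives admits a unique positively oriented orthonormal frame with skew-symmetric connection matrix, this frame is the Frenet frame of $\alpha^{(4)}$, which also justifies calling $K$ the principal curvature, $-r$ the torsion and $K-k$ the bitorsion, and establishes the relations $N_{1}=b\times T$, $N_{2}=n\times T$, $N_{3}=t\times T$.

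I expect the only non-routine point to be the relation $T'=K\,b\times T$, i.e. that the first Frenet normal of $\alpha^{(4)}$ equals the quaternion product of the binormal of $\alpha$ with $T$, with the orientation chosen so that $K=\|T'\|>0$; this is precisely where the definition of the spatial quaternionic curve $\alpha$ attached to $\alpha^{(4)}$ and its correspondence with $\alpha^{(4)}$ have to be invoked. Granting that, the remaining orthonormality identities are exactly those verified in the first step, and the Frenet equations reduce to the quaternion bookkeeping sketched above.
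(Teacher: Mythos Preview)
The paper does not prove this theorem; it is quoted as a preliminary result from \cite{ak}, so there is no in-paper proof to compare against. Your sketch is correct and is essentially the argument of \cite{ak} (which in turn adapts \cite{nag}): define the frame fields by right quaternion multiplication of $t,n,b$ with $T$, verify orthonormality using $\gamma(p\times q)=\gamma q\times\gamma p$ and $T\times\gamma T=1$, and then differentiate using the Frenet equations of the spatial curve $\alpha$ together with the multiplication table $b\times b=-1$, $t\times b=-n$, $n\times b=t$. Your three displayed computations for $N_{1}'$, $N_{2}'$, $N_{3}'$ are all correct.

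You have also correctly isolated the one non-formal ingredient, namely the identity $T'=K\,(b\times T)$. This is not something one can derive from the Frenet equations of $\alpha$ alone; it is exactly the content of how the ``associated'' spatial quaternionic curve $\alpha$ is attached to $\alpha^{(4)}$ in \cite{ak} (in the Type~2 construction the first normal of $\alpha^{(4)}$ is, by design, the right $T$-translate of the \emph{binormal} of $\alpha$, rather than of its tangent as in \cite{nag}). So your caveat is well placed: once that definition is invoked, the remainder of the proof is the quaternionic bookkeeping you wrote out.
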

\section{Characterizations of the Quaternionic $\left( 1,3\right)$-Bertrand curve in Euclidean space $\mathbb{R}^{4}$}

If there exists a quaternionic Bertrand curve in $\mathbb{R}^{4}$, then the
torsion $-r$ or bitorsion $K-k$ vanishes. So we can say that there is no
quaternionic Bertrand curves whose torsion and bitorsion are non-zero. Hence
by using the method which is given by Matsuda and Yorozu \cite{mat},\ we
give the definition of quaternionic $(1,3)-$Bertrand curve according to Type
2-Quaternionic Frame and then obtain a characterization for such curves.

\begin{definition}
	\label{tan 4.1}Let $\alpha ^{\left( 4\right) }:I\subset \mathbb{R}%
	\rightarrow \mathbb{R}^{4}$ \ and $\beta ^{\left( 4\right) }:\bar{I}\subset 
	\mathbb{R}\rightarrow \mathbb{R}^{4}$ be a quaternionic curves. There exists
	a regular $C^{\infty }-$function $\varphi :I\rightarrow \overline{I}%
	,s\rightarrow \varphi (s)=\bar{s}$ such that it corresponds each point $%
	\alpha ^{\left( 4\right) }(s)$ of $\alpha ^{\left( 4\right) }$ to the point $%
	\beta ^{\left( 4\right) }(s)$ of $\beta ^{\left( 4\right) },$ for all $s\in
	I.$ If $\left( 1,3\right) -$normal plane spanned by the normal vectors $%
	N_{1}\left( s\right) $ and $N_{3}\left( s\right) $ at the each point $\alpha
	^{\left( 4\right) }(s)$ of $\alpha ^{\left( 4\right) }$ coincides with $%
	\left( 1,3\right) -$normal plane spanned by the normal vectors $\bar{N}%
	_{1}\left( \bar{s}\right) $ and $\bar{N}_{3}\left( \bar{s}\right) $ at the
	corresponding point $\beta ^{\left( 4\right) }(\bar{s})=\beta ^{\left(
		4\right) }(\varphi (s))$ of $\beta ^{\left( 4\right) }$then we called $%
	\alpha ^{\left( 4\right) }$ is a quaternionic $\left( 1,3\right) -$Bertrand
	curve in $\mathbb{E}^{4}$\ and $\beta ^{\left( 4\right) }$ is called a
	quaternionic $\left( 1,3\right) -$Bertrand mate of $\alpha ^{\left( 4\right)
	}.$
\end{definition}

\begin{theorem}
	\label{teo 4.2}Let $\alpha ^{\left( 4\right) }:I\subset \mathbb{R}%
	\rightarrow \mathbb{R}^{4}$ be a quaternionic curve whose the curvatures
	functions $K,$ $-r,$ $K-k$ and $\alpha :I\subset \mathbb{R}\rightarrow 
	\mathbb{R}^{3}$ be a spatial quaternionic curve associated with quaternionic
	curve $\alpha ^{\left( 4\right) }$ in\ $\mathbb{R}^{4}$ with the curvatures $%
	k$ and $r.$ Then $\alpha ^{\left( 4\right) }$ is a quaternionic $\left(
	1,3\right) -$Bertrand curve if and only if there exists constant real
	numbers $a\neq 0,$ $b\neq 0,$ $c,$ $d$ satifying%
	\begin{equation}
	ar(s)+b\left( K-k\right) (s)\neq 0,
	\end{equation}%
	\begin{equation}
	aK(s)-c\left[ ar(s)+b\left( K-k\right) (s)\right] =1,
	\end{equation}%
	\begin{equation}
	cK(s)+r(s)=d\left( K-k\right) (s)
	\end{equation}%
	\begin{equation}
	\left( 1-c^{2}\right) K(s)r(s)+c\left( K^{2}\left( s\right) -r^{2}(s)-\left(
	K-k\right) ^{2}(s)\right) \neq 0,
	\end{equation}
\end{theorem}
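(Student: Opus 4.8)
The plan is to follow the Matsuda--Yorozu strategy adapted to the Type 2-Quaternionic Frame, proceeding in both directions of the biconditional.

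\textbf{Necessity.} Assume $\alpha^{(4)}$ is a quaternionic $(1,3)$-Bertrand curve with mate $\beta^{(4)}$ and reparametrization $\varphi$ with $\bar s=\varphi(s)$. First I would write
\begin{equation*}
\beta^{(4)}(\varphi(s))=\alpha^{(4)}(s)+u(s)N_1(s)+v(s)N_3(s)
\end{equation*}
for $C^\infty$ functions $u,v$, using that the $(1,3)$-normal planes coincide so $\beta^{(4)}-\alpha^{(4)}$ lies in $\mathrm{span}\{N_1,N_3\}$. Differentiating and using the Frenet equations (2) gives $\varphi'(s)\,\bar T(\bar s)$ as a combination of $T,N_1,N_2,N_3$; the key point is that $\bar T(\bar s)$ must be orthogonal to the common $(1,3)$-normal plane, i.e. $h(\bar T,N_1)=h(\bar T,N_3)=0$. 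Setting the $N_1$- and $N_3$-components of the derivative to zero forces $u'=0$ and $v'=0$, so $u=a$ and $v=b$ are constants; I expect one also needs $a\neq0$, $b\neq0$ (otherwise the curve degenerates to an ordinary Bertrand curve, which the preliminaries rule out when both curvatures are nonzero), and the surviving tangential relation reads
\begin{equation*}
\varphi'(s)\,\bar T(\bar s)=\bigl(1-aK(s)\bigr)T(s)+\bigl(ar(s)+b(K-k)(s)\bigr)N_2(s).
\end{equation*}
Since $\bar T$ is a unit vector this already yields $ar+b(K-k)\neq0$ once we check $\varphi'\neq0$ and the coefficient of $T$ cannot vanish identically together with it — this gives (3). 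Then I would express $\bar T$, differentiate once more to get $\varphi'\bar K\,\bar N_1$, and decompose $\bar N_1$ in the frame of $\alpha^{(4)}$; because $\bar N_1$ lies in the common $(1,3)$-plane $=\mathrm{span}\{N_1,N_3\}$, its $T$- and $N_2$-components must vanish. Writing $T$ and $N_2$ in terms of the right-hand side above and its derivative, and imposing these two vanishing conditions, should produce exactly the two scalar relations (4) and (5): (4) comes from normalizing/eliminating $\varphi'$ in the $T$-direction and (5) from the $N_2$-direction after using $cK+r=d(K-k)$ with $c,d$ the emerging constants (here $c$ will appear as the ratio governing how $\bar N_3$ sits relative to $N_1,N_3$, i.e. essentially $\cos$/$\sin$ of the constant angle between the frames). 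Finally, condition (6) is the nondegeneracy guaranteeing $\bar K\neq0$ (so $\beta^{(4)}$ is a genuine Frenet curve): I would compute $\varphi'\bar K\bar N_1$ explicitly and read off that its norm is a nonzero multiple of the left side of (6).

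\textbf{Sufficiency.} Conversely, given constants $a\neq0,b\neq0,c,d$ satisfying (3)--(6), define
\begin{equation*}
\beta^{(4)}(s)=\alpha^{(4)}(s)+a\,N_1(s)+b\,N_3(s),
\end{equation*}
and set $\varphi(s)=\int_0^s\|\tfrac{d}{dt}\beta^{(4)}(t)\|\,dt$. Differentiating with (2) gives $\frac{d}{ds}\beta^{(4)}=(1-aK)T+(ar+b(K-k))N_2$, which is nonzero by (3) (for the $N_2$-part) — actually one must also rule out simultaneous vanishing, but (3) together with (4) prevents $1-aK$ and $ar+b(K-k)$ from vanishing at once — so $\varphi$ is a regular $C^\infty$ reparametrization and $\bar T:=\frac{1}{\varphi'}\frac{d}{ds}\beta^{(4)}$ is a unit tangent. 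The task is then to show the $(1,3)$-normal plane of $\beta^{(4)}$ equals $\mathrm{span}\{N_1,N_3\}$. I would compute $\frac{d}{ds}\bar T$ and show, using (4) and (5), that it is a linear combination of $N_1$ and $N_3$ only (the relations (4)--(5) are precisely what kill the $T$- and $N_2$-components); hence $\bar N_1\in\mathrm{span}\{N_1,N_3\}$. Then $\bar N_3$ is obtained from the quaternionic cross-product structure (Theorem \ref{teo 2.2 copy(1)}, $N_3=t\times T$ etc.) or simply as the unit vector in the $(1,3)$-normal plane of $\beta^{(4)}$ orthogonal to $\bar N_1$; one checks it too lies in $\mathrm{span}\{N_1,N_3\}$ by a dimension/orthogonality count, using (6) to ensure $\bar K\neq0$ so that $\bar N_1$ is well defined. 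That establishes the coincidence of $(1,3)$-normal planes, i.e. $\alpha^{(4)}$ is a quaternionic $(1,3)$-Bertrand curve.

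\textbf{Main obstacle.} The routine-looking but delicate part is the bookkeeping in the necessity direction: after two differentiations one has a vector identity in the moving frame whose $T$- and $N_2$-components must both vanish, and teasing out that these two scalar equations are equivalent to the \emph{clean} forms (4) and (5) — rather than some messier equivalent — requires correctly identifying the constants $c$ and $d$ (the constant angle between the two frames and its companion) and repeatedly substituting the first-order relation $ar+b(K-k)\neq0$ to simplify. One must also be careful that $a,b$ are genuinely \emph{constant} (this uses $h(\bar T,N_1)=h(\bar T,N_3)=0$, which in turn relies on the frame being \emph{Type 2}, since the zero pattern of the connection matrix in (2) differs from the classical Frenet matrix and is what makes $N_2$ — not $N_1$ — the direction conjugate to the tangent). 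Ensuring every step is reversible is what makes (3)--(6) both necessary and sufficient, and conditions (3) and (6) are exactly the two nondegeneracy hypotheses that keep $\beta^{(4)}$ a regular Frenet curve throughout.
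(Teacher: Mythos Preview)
Your overall strategy matches the paper's (Matsuda--Yorozu adapted to the Type~2 frame), but two of your attributions are wrong, and they matter.

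\textbf{On the derivation of (4) and (5).} You claim both relations come from the single step ``differentiate $\bar T$ once, set the $T$- and $N_2$-components of $\varphi'\bar K\bar N_1$ to zero.'' That is not what happens. Writing $\bar T=\cos\tau\,T+\sin\tau\,N_2$ and differentiating, the $T$- and $N_2$-components are $(\cos\tau)'$ and $(\sin\tau)'$; their vanishing only says the angle $\tau$ is \emph{constant}. Relation~(4) then follows by reading off $c=\cot\tau_0$ from the tangent identity $(1-aK)\sin\tau_0=-(ar+b(K-k))\cos\tau_0$. Relation~(5) requires a \emph{second} differentiation: with $\bar N_1=\cos\eta\,N_1+\sin\eta\,N_3$, the Frenet equation $\bar N_1'=-\bar K\bar T-\bar r\bar N_2\in\mathrm{span}\{T,N_2\}$ forces $(\cos\eta)'=(\sin\eta)'=0$, hence $\eta$ constant, and then $d=\cot\eta_0$ gives $cK+r=d(K-k)$. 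So (4) encodes the constant angle between $\bar T$ and $T$ in $\mathrm{span}\{T,N_2\}$, while (5) encodes the separate constant angle between $\bar N_1$ and $N_1$ in $\mathrm{span}\{N_1,N_3\}$; your description collapses these into one step and misplaces~$c$ (it governs $\bar T$, not $\bar N_3$). Correspondingly, in the sufficiency direction, once (4) gives $\bar T=\epsilon^{-1}(1+c^2)^{-1/2}(cT+N_2)$, its derivative is automatically in $\mathrm{span}\{N_1,N_3\}$; relation~(5) is not used to ``kill $T$- and $N_2$-components'' but to show the angle $\gamma$ in $\bar N_1=\cos\gamma\,N_1+\sin\gamma\,N_3$ is constant, which is what makes $\bar N_2$ and the bitorsion computable.

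\textbf{On the role of (6).} You say (6) guarantees $\bar K\neq0$. It does not: from the computation one gets $(\bar K\varphi')^2=(1+c^2)^{-1}\bigl[(cK+r)^2+(K-k)^2\bigr]$, which is nonzero simply because $K-k\neq0$. Condition~(6) is instead the nonvanishing of the \emph{torsion} $-\bar r$ of $\beta^{(4)}$: after computing $\bar N_1'+\bar K\bar T=-\bar r\bar N_2$, its norm is a nonzero multiple of $\bigl|(1-c^2)Kr+c(K^2-r^2-(K-k)^2)\bigr|$, and (6) is exactly what ensures $\bar r\neq0$ so that $\bar N_2$ (and hence the full Type~2 frame of $\beta^{(4)}$) exists. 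In your sufficiency sketch the same correction applies: (6) is needed for $\bar N_2$, not $\bar N_1$.

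Minor points: your tangent formula has a sign slip (the $N_2$-coefficient is $-(ar+b(K-k))$, from $N_1'=-KT-rN_2$ and $N_3'=-(K-k)N_2$); and your argument for (3) is vague---the clean route is: if $ar+b(K-k)\equiv0$ then $\sin\tau_0=0$, so $\bar T=\pm T$, hence $\bar N_1=\pm N_1$, contradicting the standing assumption that the $(1,3)$-planes coincide nontrivially.
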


for all $s\in I.$

\begin{proof}
	We suppose that $\alpha ^{\left( 4\right) }$ is a quaternionic $\left(
	1,3\right) $ Bertrand curve given by arc-lenght parameter $s$ and $\beta
	^{\left( 4\right) }$ is a quaternionic $\left( 1,3\right) $-Bertrand mate of 
	$\alpha ^{\left( 4\right) }$ with arc-lenght parameter $\bar{s}.$ Then we
	have 
	\begin{equation}
	\beta ^{\left( 4\right) }\left( \bar{s}\right) =\beta ^{\left( 4\right)
	}\left( \varphi \left( s\right) \right) =\alpha ^{\left( 4\right)
	}(s)+a(s)N_{1}(s)+b(s)N_{3}(s)
	\end{equation}%
	for all $s\in I,$ where $a,$ $b:I\rightarrow \mathbb{R}$ are differentiable
	functions. Taking the derivative of (7) with respect to $s$ and using (2),
	we have%
	\begin{equation}
	\begin{array}{cl}
	\bar{T}\left( \bar{s}\right) \varphi ^{\shortmid }\left( s\right) = & \left[
	1-a(s)K(s)\right] T\left( s\right) +a^{\shortmid }(s)N_{1}(s) \\ 
	& -\left[ a(s)r(s)+b(s)(K-k)(s)\right] N_{2}(s)+b^{\shortmid }(s)N_{3}(s)%
	\end{array}%
	\end{equation}%
	for all $s\in I.$
	
	Since $Sp\left \{ N_{1}(s),N_{3}(s)\right \} =Sp\left \{ \bar{N}_{1}(\bar{s}%
	),\bar{N}_{3}(\bar{s})\right \} ,$ we can write%
	\begin{equation}
	\bar{N}_{1}(\bar{s})=\cos \theta (s)N_{1}(s)+\sin \theta (s)N_{3}(s),
	\end{equation}%
	\begin{equation}
	\bar{N}_{3}(\bar{s})=-\sin \theta (s)N_{1}(s)+\cos \theta (s)N_{3}(s).
	\end{equation}%
	We notice that $\sin \theta (s)\neq 0.$\ Otherwise, $\bar{N}_{1}(\bar{s}%
	)=\pm N_{1}(s).$ By using (8) and (9), we get%
	\begin{equation}
	h(\bar{T}\left( \bar{s}\right) \varphi ^{\shortmid }\left( s\right) ,\bar{N}%
	_{1}(\bar{s}))=\cos \theta (s)a^{\shortmid }(s)+\sin \theta (s)b^{\shortmid
	}(s)=0.
	\end{equation}%
	By using (8) and (10), we get%
	\begin{equation}
	h(\bar{T}\left( \bar{s}\right) \varphi ^{\shortmid }\left( s\right) ,\bar{N}%
	_{3}(\bar{s}))=-\sin \theta (s)a^{\shortmid }(s)+\cos \theta (s)b^{\shortmid
	}(s)=0.
	\end{equation}%
	From (11) and (12), since $\left \vert 
	\begin{array}{cc}
	\cos \theta (s) & \sin \theta (s) \\ 
	-\sin \theta (s) & \cos \theta (s)%
	\end{array}%
	\right \vert =1,$ we find 
	\begin{equation*}
	a^{\shortmid }(s)=0\text{, }b^{\shortmid }(s)=0.
	\end{equation*}%
	From above equalites, we obtain that $a$ and $b$ are real constants.
	
	So, we can rewrite $\beta ^{\left( 4\right) }$ given by (7) as:%
	\begin{equation}
	\beta ^{\left( 4\right) }\left( \bar{s}\right) =\alpha ^{\left( 4\right)
	}(s)+aN_{1}(s)+bN_{3}(s)
	\end{equation}%
	and the unit tangent vector of $\beta ^{\left( 4\right) }$ is following:%
	\begin{equation}
	\bar{T}\left( \bar{s}\right) \varphi ^{\shortmid }\left( s\right) =\left(
	1-aK(s)\right) T\left( s\right) -\left( ar(s)+b(K-k)(s)\right) N_{2}(s),
	\end{equation}%
	where%
	\begin{equation}
	\left( \varphi ^{\shortmid }\left( s\right) \right) ^{2}=\left(
	1-aK(s)\right) ^{2}+\left( ar(s)+b(K-k)(s)\right) ^{2}\neq 0
	\end{equation}%
	for all $s\in I,$ if we denote by 
	\begin{equation}
	\cos \tau \left( s\right) =\left( \frac{1-aK(s)}{\varphi ^{\shortmid }\left(
		s\right) }\right) \text{, }\sin \tau \left( s\right) =-\left( \frac{%
		ar(s)+b(K-k)(s)}{\varphi ^{\shortmid }\left( s\right) }\right) ,
	\end{equation}%
	where $\tau $ is differentiable function on $I$, so we can rewrite (14) as:%
	\begin{equation}
	\bar{T}\left( \bar{s}\right) =\cos \tau \left( s\right) T(s)+\sin \tau
	\left( s\right) N_{2}(s)
	\end{equation}%
	If we calculate the derivative of (17) with respect to $s$ and use (2), we
	obtain%
	\begin{equation*}
	\begin{array}{c}
	\bar{K}(\bar{s})\bar{N}_{1}\left( \bar{s}\right) \varphi ^{\shortmid }\left(
	s\right) =\left( \cos \tau \left( s\right) \right) ^{\shortmid }T(s)+\left[
	\cos \tau \left( s\right) K(s)+\sin \tau \left( s\right) r(s)\right] N_{1}(s)
	\\ 
	+\left( \sin \tau \left( s\right) \right) ^{\shortmid }N_{2}(s)+\sin \tau
	\left( s\right) (K-k)(s)N_{3}(s)%
	\end{array}%
	\end{equation*}%
	From (9), we know that $\bar{N}_{1}\left( \bar{s}\right) \in Sp\left \{
	N_{1}(s),N_{3}(s)\right \} .$ So, from the above equation%
	\begin{equation*}
	\left( \cos \tau \left( s\right) \right) ^{\shortmid }=0\text{, }\left( \sin
	\tau \left( s\right) \right) ^{\shortmid }=0,
	\end{equation*}%
	and it means that $\tau =\tau _{0}$\ is a real constant. Then we can rewrite
	(17) as:%
	\begin{equation}
	\bar{T}\left( \bar{s}\right) =\cos \tau _{0}\left( s\right) T(s)+\sin \tau
	_{0}\left( s\right) N_{2}(s)
	\end{equation}%
	and from (16), we get%
	\begin{equation}
	\cos \tau _{0}\varphi ^{\shortmid }\left( s\right) =1-aK(s)
	\end{equation}%
	and%
	\begin{equation}
	\sin \tau _{0}\varphi ^{\shortmid }\left( s\right) =-\left(
	ar(s)+b(K-k)(s)\right)
	\end{equation}%
	From (19) and (20)%
	\begin{equation}
	\left( 1-aK(s)\right) \sin \tau _{0}=-\left( ar(s)+b(K-k)(s)\right) \cos
	\tau _{0}
	\end{equation}%
	If $\sin \tau _{0}$ vanishes, then $\cos \tau _{0}=\pm 1.$ And from (18), we
	get $\bar{T}\left( \bar{s}\right) =\pm T(s)$. If we differentiate this
	equality and use (2),\ we have $\bar{N}_{1}(\bar{s})=\pm 1N_{1}(s).$ It is a
	contradiction. So $\sin \tau _{0}\neq 0,$ that is, from (20) implies that 
	\begin{equation*}
	ar(s)+b(K-k)(s)\neq 0.
	\end{equation*}
	Hence we obtain the relation (3).
	
	If we denote the constant $c$ by $c=\frac{\cos \tau _{0}}{\sin \tau _{0}}$,
	from (21), 
	\begin{equation*}
	aK(s)-c\left( ar(s)+b(K-k)(s)\right) =1
	\end{equation*}%
	for all $s\in I.$ Thus we find the relation (4). Differentiating (18) with
	respect to $s$ and using the equations of Type 2- Quaternionic Frame given
	by (2), we have 
	\begin{equation}
	\bar{K}(\bar{s})\bar{N}_{1}\left( \bar{s}\right) \varphi ^{\shortmid }\left(
	s\right) =\left( \cos \tau _{0}K(s)+\sin \tau _{0}r(s)\right) N_{1}(s)+\sin
	\tau _{0}(K-k)(s)N_{3}(s).
	\end{equation}%
	By using (22) we have%
	\begin{equation*}
	\left( \bar{K}(\bar{s})\varphi ^{\shortmid }\left( s\right) \right)
	^{2}=\left( \sin \tau _{0}\right) ^{2}\left[ \left( \frac{\cos \tau _{0}}{%
		\sin \tau _{0}}K(s)+r(s)\right) ^{2}+\left( (K-k)(s)\right) ^{2}\right] .
	\end{equation*}%
	By using (19) and (20) in above equality,%
	\begin{equation}
	\left( \bar{K}(\bar{s})\varphi ^{\shortmid }\left( s\right) \right)
	^{2}=\left( ar(s)+b(K-k)(s)\right) ^{2}\left[ \left( cK(s)+r(s)\right)
	^{2}+\left( (K-k)(s)\right) ^{2}\right] \left( \varphi ^{\shortmid }\left(
	s\right) \right) ^{-2}.
	\end{equation}%
	On the other hand, from (4) and (15), we obtain%
	\begin{equation}
	\left( \varphi ^{\shortmid }\left( s\right) \right) ^{2}=\left(
	1+c^{2}\right) \left( ar(s)+b(K-k)(s)\right) ^{2}
	\end{equation}%
	Then if we consider with (23) and (24), we get%
	\begin{equation}
	\left( \bar{K}(\bar{s})\varphi ^{\shortmid }\left( s\right) \right) ^{2}=%
	\frac{1}{1+c^{2}}\left[ \left( cK(s)+r(s)\right) ^{2}+\left( (K-k)(s)\right)
	^{2}\right]
	\end{equation}%
	By using (19), (20) and the ralation (4), we rewrite (22) as:%
	\begin{equation}
	\bar{N}_{1}\left( \bar{s}\right) =\cos \eta (s)N_{1}(s)+\sin \eta
	(s)N_{3}(s),
	\end{equation}%
	where 
	\begin{equation}
	\cos \eta (s)=\frac{-\left( ar(s)+b(K-k)(s)\right) \left( cK(s)+r(s)\right) 
	}{\bar{K}(\bar{s})\left( \varphi ^{\shortmid }\left( s\right) \right) ^{2}}%
	\text{,}
	\end{equation}%
	and 
	\begin{equation}
	\sin \eta (s)=\frac{-\left( ar(s)+b(K-k)(s)\right) (K-k)(s)}{\bar{K}(\bar{s}%
		)\left( \varphi ^{\shortmid }\left( s\right) \right) ^{2}}
	\end{equation}%
	for $s\in I.$ Here, $\eta $ is differentiable function on $I.$
	
	Taking the derivative of (26) and using the equations of Type 2-
	Quaternionic Frame given by (2), we have
	
	\begin{eqnarray}
	\left( -\bar{K}(\bar{s})\bar{T}\left( \bar{s}\right) -\bar{r}\left( \bar{s}%
	\right) \bar{N}_{2}\left( \bar{s}\right) \right) \varphi ^{\shortmid }\left(
	s\right) &=&-\cos \eta (s)K(s)T(s)+\left( \cos \eta (s)\right) ^{^{\shortmid
	}}N_{1}(s) \\
	&&+\left( -\cos \eta (s)r(s)-\sin \eta (s)(K-k)(s)\right) N_{2}(s)  \notag \\
	&&+\left( \sin \eta (s)\right) ^{\shortmid }N_{3}(s)  \notag
	\end{eqnarray}%
	From (29), it satisfies%
	\begin{equation*}
	\left( \cos \eta (s)\right) ^{\shortmid }=0\text{ \ and }\left( \sin \eta
	(s)\right) ^{\shortmid }=0,
	\end{equation*}
	
	that is, $\eta =\eta _{0}$ is a constant function on $I.$ Let $d=\frac{\cos
		\eta _{0}}{\sin \eta _{0}}$ be a constant then from (27) and (28), we find
	following relation:%
	\begin{equation*}
	cK(s)+r(s)=d(K-k)(s).
	\end{equation*}%
	Thus we obtain the relation (5).
	
	Since $\eta =\eta _{0}$ is a constant function, we rewrite (29)%
	\begin{eqnarray*}
		\left( -\bar{K}(\bar{s})\bar{T}\left( \bar{s}\right) -\bar{r}\left( \bar{s}%
		\right) \bar{N}_{2}\left( \bar{s}\right) \right) \varphi ^{\shortmid }\left(
		s\right) &=&-\cos \eta _{0}K(s)T(s)+ \\
		&&+\left( -\cos \eta _{0}r(s)-\sin \eta _{0}(K-k)(s)\right) N_{2}(s)
	\end{eqnarray*}
	
	By considering above equation with (14), we get 
	\begin{eqnarray*}
		-\bar{r}\left( \bar{s}\right) \bar{N}_{2}\left( \bar{s}\right) \varphi
		^{\shortmid }\left( s\right) &=&\left( \bar{K}(\bar{s})\varphi ^{\shortmid
		}\left( s\right) \frac{\left( 1-aK(s)\right) }{\varphi ^{\shortmid }\left(
			s\right) }-\cos \eta _{0}K(s)\right) T(s) \\
		&&+\left( 
		\begin{array}{c}
			-\bar{K}(\bar{s})\varphi ^{\shortmid }\left( s\right) \frac{\left(
				ar(s)+b(K-k)(s)\right) }{\varphi ^{\shortmid }\left( s\right) } \\ 
			-\cos \eta _{0}r(s)-\sin \eta _{0}(K-k)(s)%
		\end{array}%
		\right) N_{2}(s) \\
		&=&\frac{1}{\bar{K}(\bar{s})\left( \varphi ^{\shortmid }\left( s\right)
			\right) ^{2}}\left \{ A(s)T(s)+B(s)N_{2}(s)\right \} ,
	\end{eqnarray*}%
	where%
	\begin{eqnarray*}
		A(s) &=&\left( \bar{K}(\bar{s})\varphi ^{\shortmid }\left( s\right) \right)
		^{2}\left( 1-aK(s)\right) +\left( ar(s)+b(K-k)(s)\right) \left(
		cK(s)+r(s)\right) K(s), \\
		B(s) &=&-\left( \bar{K}(\bar{s})\varphi ^{\shortmid }\left( s\right) \right)
		^{2}\left( ar(s)+b(K-k)(s)\right) +\left( ar(s)+b(K-k)(s)\right) \left(
		cK(s)+r(s)\right) r(s) \\
		&&+\left( ar(s)+b(K-k)(s)\right) \left( \left( K-k\right) (s)\right) ^{2}
	\end{eqnarray*}%
	By using (25) and the ralation (4), we can rewrite $A(s)$ and $B(s)$ as
	follow:%
	\begin{equation*}
	A(s)=\left( 1+c^{2}\right) ^{-1}\left( ar(s)+b(K-k)(s)\right) \left \{
	\left( 1-c^{2}\right) K\left( s\right) r(s)+c\left( K^{2}\left( s\right)
	-r^{2}(s)-\left( K-k\right) ^{2}(s)\right) \right \}
	\end{equation*}%
	and 
	\begin{equation*}
	B(s)=-c\left( 1+c^{2}\right) ^{-1}\left( ar(s)+b(K-k)(s)\right) \left \{
	\left( 1-c^{2}\right) K\left( s\right) r(s)+c\left( K^{2}\left( s\right)
	-r^{2}(s)-\left( K-k\right) ^{2}(s)\right) \right \}
	\end{equation*}%
	Since $\bar{r}\left( \bar{s}\right) \bar{N}_{2}\left( \bar{s}\right) \varphi
	^{\shortmid }\left( s\right) \neq 0$ for $\forall s\in I$ , we have%
	\begin{equation*}
	\left( 1-c^{2}\right) K\left( s\right) r(s)+c\left( K^{2}\left( s\right)
	-r^{2}(s)-\left( K-k\right) ^{2}(s)\right) \neq 0
	\end{equation*}%
	for all $s\in I.$ Thus we obtain the relation (6).
	
	Conversely, let $\alpha ^{\left( 4\right) }:I\subset \mathbb{R}\rightarrow $ 
	$\mathbb{E}^{4}$ be a quaternionic curve with curvatures $K,$ $-r,$ $%
	(K-k)\neq 0$ satisfying the equations (3), (4), (5), (6) for constant
	numbers $a,$ $b,c,$ $d$ and $\beta ^{\left( 4\right) }$ be a quaternionic
	curve such that 
	\begin{equation*}
	\beta ^{\left( 4\right) }(s)=\alpha ^{\left( 4\right) }\left( s\right)
	+aN_{1}(s)+bN_{3}(s)
	\end{equation*}%
	for all $s\in I.$ Differentiating above equality with respect to $s$ and
	using the equations of Type 2- Quaternionic Frame given by (2), we have%
	\begin{equation*}
	\frac{d\beta ^{\left( 4\right) }\left( s\right) }{ds}=\left( 1-aK(s)\right)
	T\left( s\right) -\left( ar(s)+b(K-k)(s)\right) N_{2}(s),
	\end{equation*}%
	thus, by using the relation \ (4), we obtain%
	\begin{equation*}
	\frac{d\beta ^{\left( 4\right) }\left( s\right) }{ds}=-\left(
	ar(s)+b(K-k)(s)\right) (cT\left( s\right) +N_{2}(s))
	\end{equation*}%
	for all $s\in I$. From the relation (3), since $ar(s)+b(K-k)(s)\neq 0,$ the
	curve $\beta ^{\left( 4\right) }$ is a regular curve. Then there exists a
	regular $C^{\infty }-$function $\varphi :I\rightarrow \bar{I}$ defined by%
	\begin{equation*}
	\bar{s}=\varphi \left( s\right) =\int \left \Vert \frac{d\beta ^{\left(
			4\right) }\left( s\right) }{ds}\right \Vert ds
	\end{equation*}%
	where $\bar{s}$ denotes the arc-length parameter of $\beta ^{\left( 4\right)
	}$. Then%
	\begin{equation}
	\varphi ^{\shortmid }\left( s\right) =\varepsilon \sqrt{1+c^{2}}\left(
	ar(s)+b(K-k)(s)\right)
	\end{equation}%
	where if $ar(s)+b(K-k)(s)>0$ then $\varepsilon =1,$\ if $ar(s)+b(K-k)(s)<0$\
	then $\varepsilon =-1$ for all $s\in I$. Hence we can express $\beta
	^{\left( 4\right) }$ again as:%
	\begin{equation*}
	\beta ^{\left( 4\right) }\left( \bar{s}\right) =\beta ^{\left( 4\right)
	}\left( \varphi \left( s\right) \right) =\alpha ^{\left( 4\right)
	}(s)+aN_{1}(s)+bN_{3}(s)
	\end{equation*}%
	Differentiating the above equality with respect to $s,$ we have%
	\begin{equation}
	\varphi ^{\shortmid }\left( s\right) \frac{d\beta ^{\left( 4\right) }\left( 
		\bar{s}\right) }{d\bar{s}}=-\left( ar(s)+b(K-k)(s)\right) \left(
	cT(s)+N_{2}(s)\right)
	\end{equation}%
	Considering (30) and (31) with together, we can write%
	\begin{equation}
	\bar{T}\left( \bar{s}\right) =\frac{1}{\epsilon \sqrt{1+c^{2}}}\left(
	cT(s)+N_{2}(s)\right) ,
	\end{equation}%
	where $\epsilon =-\varepsilon .$\ Differentiating (32) with respect to $s$
	and using the equations of Type 2-Quaternionic Frame, we get%
	\begin{equation*}
	\varphi ^{\shortmid }\left( s\right) \frac{d\bar{T}\left( \bar{s}\right) }{d%
		\bar{s}}=\frac{1}{\epsilon \sqrt{1+c^{2}}}\left( \left( cK(s)+r(s)\right)
	N_{1}\left( s\right) +(K-k)(s)N_{3}\left( s\right) \right)
	\end{equation*}%
	Then we can calculate curvature of $\beta ^{\left( 4\right) }$ as: 
	\begin{equation}
	\bar{K}(\bar{s})=\left \Vert \frac{d\bar{T}\left( \bar{s}\right) }{d\bar{s}}%
	\right \Vert =\frac{\sqrt{\left( cK(s)+r(s)\right) ^{2}+\left( \left(
			K-k\right) \left( s\right) \right) ^{2}}}{\varphi ^{\shortmid }(s)\sqrt{%
			1+c^{2}}}.
	\end{equation}%
	for all $s\in I$. From using the equations of Type 2-Quaternionic Frame
	given by (2), we can determine the unit normal vector $\bar{N}_{1}$ along $%
	\beta ^{\left( 4\right) }$%
	\begin{eqnarray*}
		\bar{N}_{1}(\bar{s}) &=&\frac{1}{\bar{K}(\bar{s})}\frac{d\bar{T}\left( \bar{s%
			}\right) }{d\bar{s}} \\
		&=&\frac{\left( \left( cK(s)+r(s)\right) N_{1}\left( s\right)
			+(K-k)(s)N_{3}\left( s\right) \right) }{\epsilon \sqrt{\left(
				cK(s)+r(s)\right) ^{2}+\left( \left( K-k\right) \left( s\right) \right) ^{2}}%
		}
	\end{eqnarray*}%
	for all $s\in I$. Thus we can put%
	\begin{equation}
	\bar{N}_{1}(\bar{s})=\cos \gamma (s)N_{1}\left( s\right) +\sin \gamma
	(s)N_{3}\left( s\right) ,
	\end{equation}%
	where%
	\begin{equation}
	\cos \gamma (s)=\frac{cK(s)+r(s)}{\epsilon \sqrt{\left( cK(s)+r(s)\right)
			^{2}+\left( \left( K-k\right) \left( s\right) \right) ^{2}}}
	\end{equation}%
	and%
	\begin{equation}
	\sin \gamma (s)=\frac{\left( K-k\right) \left( s\right) }{\epsilon \sqrt{%
			\left( cK(s)+r(s)\right) ^{2}+\left( \left( K-k\right) \left( s\right)
			\right) ^{2}}},
	\end{equation}%
	So differentiating (34) with respect to $s$ and using (2),\ we have 
	\begin{eqnarray*}
		\frac{\bar{N}_{1}(\bar{s})}{d\bar{s}}\varphi ^{\shortmid }\left( s\right)
		&=&-K(s)\cos \gamma (s)T(s)+\left( \cos \gamma (s)\right) ^{\shortmid
		}N_{1}\left( s\right) \\
		&&+\left( -r\left( s\right) \cos \gamma (s)-\left( K-k\right) \left(
		s\right) \sin \gamma (s)\right) N_{2}\left( s\right) +\left( \sin \gamma
		(s)\right) ^{\shortmid }N_{3}\left( s\right)
	\end{eqnarray*}%
	On the other hand, from the relation (5), we get 
	\begin{equation*}
	\frac{cK(s)+r(s)}{\left( K-k\right) \left( s\right) }=d
	\end{equation*}%
	Calculating the derivative of the last equation with respect to $s,$ we find
	the following equality:%
	\begin{equation}
	\left( cK^{^{\shortmid }}(s)+r^{^{\shortmid }}(s)\right) \left( K-k\right)
	\left( s\right) -\left( cK(s)+r(s)\right) \left( K-k\right) ^{\shortmid
	}\left( s\right) =0
	\end{equation}%
	Taking the derivatives of (35) and (36) and using (37), we obtain 
	\begin{equation*}
	\left( \cos \gamma (s)\right) ^{\shortmid }=0\text{ and }\left( \sin \gamma
	(s)\right) ^{\shortmid }=0,
	\end{equation*}%
	that is, $\gamma $ is a real constant with value $\gamma _{0}.$\ Thus we
	have 
	\begin{equation}
	\cos \gamma _{0}=\frac{cK(s)+r(s)}{\epsilon \sqrt{\left( cK(s)+r(s)\right)
			^{2}+\left( K-k\right) ^{2}\left( s\right) }}
	\end{equation}%
	and%
	\begin{equation}
	\sin \gamma _{0}=\frac{\left( K-k\right) \left( s\right) }{\epsilon \sqrt{%
			\left( cK(s)+r(s)\right) ^{2}+\left( K-k\right) ^{2}\left( s\right) }}
	\end{equation}%
	Hence we can rewrite (34) as:%
	\begin{equation}
	\bar{N}_{1}(\bar{s})=\cos \gamma _{0}N_{1}\left( s\right) +\sin \gamma
	_{0}N_{3}\left( s\right)
	\end{equation}%
	Differentiating (40) with respect to $s$ and using the equations of Type 2-
	Quaternionic Frame given by (2), (38), (39), we have%
	\begin{eqnarray*}
		\frac{d\bar{N}_{1}(\bar{s})}{d\bar{s}} &=&-\frac{\left( cK(s)+r(s)\right)
			K(s)}{\epsilon \varphi ^{\shortmid }\left( s\right) \sqrt{\left(
				cK(s)+r(s)\right) ^{2}+\left( \left( K-k\right) \left( s\right) \right) ^{2}}%
		}T(s) \\
		&&-\frac{\left( cK(s)+r(s)\right) r(s)+\left( \left( K-k\right) \left(
			s\right) \right) ^{2}}{\epsilon \varphi ^{\shortmid }\left( s\right) \sqrt{%
				\left( cK(s)+r(s)\right) ^{2}+\left( \left( K-k\right) \left( s\right)
				\right) ^{2}}}N_{2}(s)
	\end{eqnarray*}%
	By using (32) and (33), we have%
	\begin{equation*}
	\bar{K}(\bar{s})\bar{T}(\bar{s})=\frac{\left( cK(s)+r(s)\right) ^{2}+\left(
		\left( K-k\right) \left( s\right) \right) ^{2}}{\epsilon \varphi ^{\shortmid
		}\left( s\right) \left( 1+c^{2}\right) \sqrt{\left( cK(s)+r(s)\right)
			^{2}+\left( \left( K-k\right) \left( s\right) \right) ^{2}}}\left(
	cT(s)+N_{2}(s)\right)
	\end{equation*}%
	By using the above equalities, we have 
	\begin{equation*}
	\frac{d\bar{N}_{1}(\bar{s})}{d\bar{s}}+\bar{K}(\bar{s})\bar{T}(\bar{s})=%
	\frac{P(s)}{R(s)}T(s)+\frac{Q(s)}{R(s)}N_{2}(s),
	\end{equation*}%
	where we can easily show%
	\begin{eqnarray*}
		P(s) &=&-\left[ \left( 1-c^{2}\right) K(s)r(s)+c\left \{
		K^{2}(s)-r^{2}(s)-\left( K-k\right) ^{2}\left( s\right) \right \} \right] \\
		Q(s) &=&c\left[ \left( 1-c^{2}\right) K(s)r(s)+c\left \{
		K^{2}(s)-r^{2}(s)-\left( K-k\right) ^{2}\left( s\right) \right \} \right] \\
		R(s) &=&\epsilon \varphi ^{\shortmid }\left( s\right) \left( 1+c^{2}\right) 
		\sqrt{\left( cK(s)+r(s)\right) ^{2}+\left( \left( K-k\right) \left( s\right)
			\right) ^{2}}\neq 0\text{.}
	\end{eqnarray*}%
	Since $\frac{d\bar{N}_{1}(\bar{s})}{d\bar{s}}+\bar{K}(\bar{s})\bar{T}(\bar{s}%
	)=-\bar{r}\left( \bar{s}\right) \bar{N}_{2}(\bar{s}),$ we obtain the torsion
	of $\beta ^{\left( 4\right) }$%
	\begin{eqnarray}
	-\bar{r}\left( \bar{s}\right) &=&\left \Vert \frac{d\bar{N}_{1}(\bar{s})}{d%
		\bar{s}}+\bar{K}(\bar{s})\bar{T}(\bar{s})\right \Vert \\
	&=&\frac{1}{R(s)}\sqrt{P^{2}(s)+Q^{2}(s)}  \notag \\
	&=&\frac{\left \vert \left( 1-c^{2}\right) K(s)r(s)+c\left \{
		K^{2}(s)-r^{2}(s)-\left( K-k\right) ^{2}\left( s\right) \right \} \right
		\vert }{\varphi ^{\shortmid }\left( s\right) \sqrt{1+c^{2}}\sqrt{\left(
			cK(s)+r(s)\right) ^{2}+\left( \left( K-k\right) \left( s\right) \right) ^{2}}%
	}.  \notag
	\end{eqnarray}%
	Now we can define unit vector field $\bar{N}_{2}(\bar{s})$ along $\beta
	^{\left( 4\right) },$%
	\begin{equation*}
	\bar{N}_{2}(\bar{s})=-\frac{1}{\bar{r}\left( \bar{s}\right) }\left( \frac{d%
		\bar{N}_{1}(\bar{s})}{d\bar{s}}+\bar{K}(\bar{s})\bar{T}(\bar{s})\right) ,
	\end{equation*}%
	that is,%
	\begin{equation}
	\bar{N}_{2}(\bar{s})=\frac{1}{\epsilon \sqrt{1+c^{2}}}\left(
	-T(s)+cN_{2}(s)\right)
	\end{equation}%
	Also, we can define the unit vector field $\bar{N}_{3}(\bar{s})$ along $%
	\beta ^{\left( 4\right) }$ as: 
	\begin{eqnarray}
	\bar{N}_{3}(\bar{s}) &=&-\sin \gamma _{0}N_{1}\left( s\right) +\cos \gamma
	_{0}N_{3}\left( s\right)  \notag \\
	&=&\frac{1}{\epsilon \sqrt{\left( cK(s)+r(s)\right) ^{2}+\left( \left(
			K-k\right) \left( s\right) \right) ^{2}}}\left( 
	\begin{array}{c}
	-\left( K-k\right) \left( s\right) N_{1}\left( s\right) \\ 
	+\left( cK(s)+r(s)\right) N_{3}\left( s\right)%
	\end{array}%
	\right)
	\end{eqnarray}%
	Finally we define the bitorsion of $\beta ^{\left( 4\right) }$%
	\begin{eqnarray}
	\left( \bar{K}-\bar{k}\right) (\bar{s}) &=&\left \langle \frac{d\bar{N}_{2}(%
		\bar{s})}{d\bar{s}},\bar{N}_{3}(\bar{s})\right \rangle  \notag \\
	&=&\frac{\left( K-k\right) \left( s\right) K(s)\sqrt{1+c^{2}}}{\varphi
		^{\shortmid }\left( s\right) \sqrt{\left( cK(s)+r(s)\right) ^{2}+\left(
			\left( K-k\right) \left( s\right) \right) ^{2}}}
	\end{eqnarray}%
	for all $s\in I.$ Using the Frenet vectors $\bar{T},$ $\bar{N}_{1},$ $\bar{N}%
	_{2},$ $\bar{N}_{3}$ we can easily see that 
	\begin{equation*}
	h\left( \bar{T},\bar{T}\right) =h\left( \bar{N}_{1},\bar{N}_{1}\right)
	=h\left( \bar{N}_{2},\bar{N}_{2}\right) =h\left( \bar{N}_{3},\bar{N}%
	_{3}\right) =1,
	\end{equation*}%
	and 
	\begin{equation*}
	h\left( \bar{T},\bar{N}_{1}\right) =h\left( \bar{T},\bar{N}_{2}\right)
	=h\left( \bar{T},\bar{N}_{3}\right) =h\left( \bar{N}_{1},\bar{N}_{2}\right)
	=h\left( \bar{N}_{1},\bar{N}_{3}\right) =h\left( \bar{N}_{2},\bar{N}%
	_{3}\right) =0,
	\end{equation*}%
	for all $s\in I$ where $\left \{ \bar{T}(\bar{s}),\bar{N}_{1}(\bar{s}),\bar{N%
	}_{2}(\bar{s}),\bar{N}_{3}(\bar{s})\right \} $ is Frenet frame along
	quaternionic curve $\beta ^{4}$ in $\mathbb{E}^{4}.$ And it is fact that $%
	\left( 1,3\right) $ normal plane $Sp\left \{ N_{1},N_{3}\right \} $ of $%
	\alpha ^{\left( 4\right) }$ coincides $\left( 1,3\right) $ normal plane $%
	Sp\left \{ \bar{N}_{1},\bar{N}_{3}\right \} $ of $\beta ^{\left( 4\right) }$%
	. Consequently, $\alpha ^{\left( 4\right) }$ is a quaternionic $\left(
	1,3\right) $ Bertrand curve in $\mathbb{E}^{4}$ and $\beta ^{\left( 4\right)
	}$ is quaternionic $\left( 1,3\right) $ Bertrand mate of it. This completes
	the proof.
\end{proof}

\begin{theorem}
	\label{teo 4.3}Let $\alpha ^{\left( 4\right) }:I\subset \mathbb{R}%
	\rightarrow $ $\mathbb{E}^{4}$ be a quaternionic $\left( 1,3\right) $
	Bertrand curve\ and $\beta ^{\left( 4\right) }$ be a quaternionic $\left(
	1,3\right) $ Bertrand mate of $\alpha ^{\left( 4\right) }$ and $\varphi
	:I\rightarrow \bar{I},$ $\bar{s}=\varphi (s)$ is a regular $C^{\infty }-$%
	function such that $s$ and $\bar{s}$ are arc-length parameter of $\alpha
	^{\left( 4\right) }$ and $\beta ^{\left( 4\right) }$, respectively. Then the
	distance between the points $\alpha ^{\left( 4\right) }(s)$ and $\beta
	^{\left( 4\right) }(\bar{s})$ is constant for all $s\in I.$
\end{theorem}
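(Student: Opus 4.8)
The plan is to read off the position vector connecting corresponding points directly from the work already done in Theorem \ref{teo 4.2}. By definition of a quaternionic $(1,3)$-Bertrand mate together with equation (7), we may write $\beta^{\left(4\right)}(\bar s)=\alpha^{\left(4\right)}(s)+a(s)N_{1}(s)+b(s)N_{3}(s)$ for differentiable functions $a,b$; but the first part of the proof of Theorem \ref{teo 4.2} shows, via equations (11) and (12) and the fact that the rotation matrix in $\theta(s)$ has determinant $1$, that $a^{\shortmid}(s)=b^{\shortmid}(s)=0$, hence $a$ and $b$ are real constants. Therefore the difference vector is
\begin{equation*}
\beta^{\left(4\right)}(\bar s)-\alpha^{\left(4\right)}(s)=aN_{1}(s)+bN_{3}(s).
\end{equation*}

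Next I would compute the square of the distance using the Euclidean inner product $h$ and the orthonormality relations for the Type 2-Quaternionic Frame recorded at the end of Theorem \ref{teo 2.2}, namely $h(N_{1},N_{1})=h(N_{3},N_{3})=1$ and $h(N_{1},N_{3})=0$. This gives
\begin{equation*}
\left\Vert \beta^{\left(4\right)}(\bar s)-\alpha^{\left(4\right)}(s)\right\Vert^{2}
=h\!\left(aN_{1}(s)+bN_{3}(s),\,aN_{1}(s)+bN_{3}(s)\right)=a^{2}+b^{2},
\end{equation*}
which is independent of $s$. Hence the distance between $\alpha^{\left(4\right)}(s)$ and $\beta^{\left(4\right)}(\bar s)$ equals $\sqrt{a^{2}+b^{2}}$, a constant, for all $s\in I$.

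There is essentially no serious obstacle here: the only point requiring care is that the coefficients $a$ and $b$ in the representation of $\beta^{\left(4\right)}$ are genuinely constants and not merely functions of $s$, and this has already been established inside the proof of Theorem \ref{teo 4.2}, so it may simply be cited. Everything else is a one-line application of the bilinearity of $h$ and the fact that $N_{1}$ and $N_{3}$ are orthonormal. If desired, one can additionally remark that $\sqrt{a^{2}+b^{2}}>0$ since $a\neq0$ and $b\neq0$ by the hypotheses of Theorem \ref{teo 4.2}, so the two curves are genuinely distinct at every parameter value.
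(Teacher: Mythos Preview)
Your proof is correct and follows essentially the same approach as the paper: write $\beta^{(4)}(\bar s)-\alpha^{(4)}(s)=aN_{1}(s)+bN_{3}(s)$ with $a,b$ constant (as established in the proof of Theorem~\ref{teo 4.2}), then use the orthonormality of $N_{1},N_{3}$ to conclude the distance is $\sqrt{a^{2}+b^{2}}$. You have supplied a bit more explicit justification than the paper's own proof, but the argument is the same.
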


\begin{proof}
	Let $\alpha ^{\left( 4\right) }:I\subset \mathbb{R}\rightarrow $ $\mathbb{E}%
	^{4}$ be quaternionic $\left( 1,3\right) $-Bertrand curve in $\mathbb{E}^{4}$
	and $\beta ^{\left( 4\right) }:\bar{I}\subset \mathbb{R}\rightarrow \mathbb{E%
	}^{4}$ be a quaternionic $\left( 1,3\right) $-Bertrand mate of $\alpha
	^{\left( 4\right) }.$ Then we can write,%
	\begin{equation*}
	\beta ^{\left( 4\right) }\left( \bar{s}\right) =\alpha ^{\left( 4\right)
	}(s)+aN_{1}(s)+bN_{3}(s)
	\end{equation*}
	
	where $a$ and $b$ are non-zero constants. Thus, we can write%
	\begin{equation*}
	\beta ^{\left( 4\right) }\left( \bar{s}\right) -\alpha ^{\left( 4\right)
	}(s)=aN_{1}(s)+bN_{3}(s)
	\end{equation*}%
	and%
	\begin{equation*}
	\left \Vert \beta ^{\left( 4\right) }\left( \bar{s}\right) -\alpha ^{\left(
		4\right) }(s)\right \Vert =\sqrt{a^{2}+b^{2}}.
	\end{equation*}
\end{proof}

\begin{theorem}
	\label{cor 4.1}Let $\alpha ^{\left( 4\right) }:I\subset \mathbb{R}%
	\rightarrow $ $\mathbb{E}^{4}$ be a quaternionic $\left( 1,3\right) $%
	-Bertrand curve such that $\alpha :I\subset \mathbb{R}\rightarrow $ $\mathbb{%
		E}^{3}$ is a spatial quaternionic curve associated with $\alpha ^{\left(
		4\right) }$. If $\beta ^{\left( 4\right) }$ is a quaternionic $\left(
	1,3\right) $-Bertrand mate of $\alpha ^{\left( 4\right) }$ then the
	curvature functions of $\beta ^{\left( 4\right) }$\ are determined in terms
	of the principal curvature $K\ $of the curve $\alpha ^{\left( 4\right) }$
	and the principal curvature $k\ $of the curve $\alpha $ as follows:%
	\begin{eqnarray*}
		\bar{K}(\bar{s}) &=&\frac{c\sqrt{1+d^{2}}\left( K-k\right) \left( s\right) }{%
			\epsilon \delta \left( 1+c^{2}\right) \left( 1-aK(s)\right) }, \\
		-\bar{r}(\bar{s}) &=&\frac{c\left \vert \left( c\left( 1+d^{2}\right) \left(
			K-k\right) \left( s\right) -\left( 1+c^{2}\right) dK(s)\right) \right \vert 
		}{\epsilon \left( 1+c^{2}\right) \sqrt{1+d^{2}}\left( 1-aK(s)\right) }, \\
		\bar{K}(\bar{s})-\bar{k}(\bar{s}) &=&\frac{cK\left( s\right) }{\epsilon
			\delta \sqrt{1+d^{2}}\left( 1-aK(s)\right) },
	\end{eqnarray*}%
	where $\delta $ is the signature of the curvature $K-k,$ that is, $\delta
	\left( K-k\right) >0.$
\end{theorem}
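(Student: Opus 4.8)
The plan is to invoke the three curvature formulas for $\beta^{(4)}$ that were already produced in the converse part of the proof of Theorem \ref{teo 4.2}: the expression for $\bar{K}(\bar{s})$ in (33), the one for $-\bar{r}(\bar{s})$ in (41), and the one for $(\bar{K}-\bar{k})(\bar{s})$ in (44), all of which are written in terms of $K$, $r$, $K-k$ and $\varphi^{\shortmid}(s)$. It then remains only to eliminate $r$ and $\varphi^{\shortmid}(s)$ by means of the Bertrand relations (4) and (5) together with the identity (30). Since the target expressions involve only $K$, $k$ and the constants $a,b,c,d$, this is purely a matter of substitution and simplification; no new geometric input is needed.

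First I would record two reductions. From the relation (5) one has $cK(s)+r(s)=d(K-k)(s)$, hence $(cK(s)+r(s))^{2}+((K-k)(s))^{2}=(1+d^{2})((K-k)(s))^{2}$, and therefore $\sqrt{(cK(s)+r(s))^{2}+((K-k)(s))^{2}}=\sqrt{1+d^{2}}\,\delta(K-k)(s)$ by the sign convention $\delta(K-k)>0$. From the relation (4) one has $c\big(ar(s)+b(K-k)(s)\big)=aK(s)-1$, so that $ar(s)+b(K-k)(s)=-(1-aK(s))/c$; substituting this into (30) and using $\epsilon=-\varepsilon$ gives $\varphi^{\shortmid}(s)=\epsilon\sqrt{1+c^{2}}\,(1-aK(s))/c$.

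Next I would substitute these two reductions into (33) and (44); after cancelling factors of $(K-k)(s)$ where they occur and rearranging the sign factor $\delta$ (using $\delta=\delta^{-1}$ since $\delta=\pm 1$), one reads off the stated formulas for $\bar{K}(\bar{s})$ and for $\bar{K}(\bar{s})-\bar{k}(\bar{s})$ at once. The torsion is the only part involving a genuine computation: the numerator in (41) is the absolute value of $(1-c^{2})K(s)r(s)+c\big(K^{2}(s)-r^{2}(s)-(K-k)^{2}(s)\big)$, and eliminating $r$ through $r=d(K-k)-cK$ (from (5)) shows that the $K^{2}$-terms cancel and this polynomial equals $(K-k)(s)\big[d(1+c^{2})K(s)-c(1+d^{2})(K-k)(s)\big]$. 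Taking absolute values, inserting the two reductions above, and cancelling the common factor $\delta(K-k)(s)$ then yields the claimed expression for $-\bar{r}(\bar{s})$.

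The only place where anything can go wrong is the bookkeeping in that last polynomial identity --- keeping the signs and coefficients straight when expanding $(1-c^{2})Kr+c\big(K^{2}-r^{2}-(K-k)^{2}\big)$ with $r=d(K-k)-cK$ --- but it is a short, mechanical expansion. Everything else is the direct substitution of the identities (4), (5), (30), (33), (41) and (44) that are already in hand, together with the observation that $\delta^{2}=1$.
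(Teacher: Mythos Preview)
Your proposal is correct and follows exactly the route the paper takes: it invokes the formulas (33), (41), (44) established in Theorem~\ref{teo 4.2} and simplifies them using the Bertrand relations (4), (5) together with (30). The paper's own proof merely asserts that this substitution yields the result without carrying out any of the algebra, so your sketch is in fact considerably more detailed than the original.
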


\begin{proof}
	We suppose that $\alpha ^{\left( 4\right) }:I\subset \mathbb{R}\rightarrow $ 
	$\mathbb{E}^{4}$ is a quaternionic curve whose the curvatures functions $K,$ 
	$-r,$ $K-k$ and $\alpha :I\subset \mathbb{R}\rightarrow $ $\mathbb{E}^{3}$
	be a spatial quaternionic curve associated with quaternionic curve $\alpha
	^{\left( 4\right) }$ in\ $\mathbb{E}^{4}$ with the curvatures $k$ and $r.$
	In that case for constant real numbers $a\neq 0,$ $b\neq 0,$ $c,$ $d$ hold
	the relations (3), (4), (5) and (6). If $\beta ^{\left( 4\right) }$ is a
	quaternionic $\left( 1,3\right) $-Bertrand mate of $\alpha ^{\left( 4\right)
	}$ then the curvature functions of $\beta ^{\left( 4\right) }$\ are defined
	by the equations (33), (41) and (44) in Theorem 3.1. If we consider (33),
	(41) and (44) with the relations (3), (4), (5) and (6), we obtain these
	curvature functions in terms of the principal curvature $K\ $of the curve $%
	\alpha ^{\left( 4\right) }$ and the principal curvature $k\ $of the curve $%
	\alpha $.
\end{proof}

\begin{remark}
	We note that if $\alpha ^{\left( 4\right) }$ is a quaternionic $\left(
	1,3\right) $-Bertrand curve and $\beta ^{\left( 4\right) }$ is a
	quaternionic $\left( 1,3\right) $-Bertrand mate of $\alpha ^{\left( 4\right)
	}$ then the curvature functions of $\beta ^{\left( 4\right) }$ is
	independent of the torsion $-r$ of $\alpha ^{\left( 4\right) }.$
\end{remark}

\begin{corollary}
	\label{cor 4.1 copy(1)}Let $\alpha ^{\left( 4\right) }:I\subset \mathbb{R}%
	\rightarrow $ $\mathbb{E}^{4}$ be a quaternionic $\left( 1,3\right) $%
	-Bertrand curve and $\beta ^{\left( 4\right) }$ be a quaternionic $\left(
	1,3\right) $-Bertrand mate of $\alpha ^{\left( 4\right) }.$ Then the
	curvature functions of the curve $\beta $\ which is a spatial quaternionic
	curve associated with $\beta ^{\left( 4\right) }$ are defined by%
	\begin{eqnarray*}
		\bar{k}(\bar{s}) &=&\frac{c\left[ \left( 1+d^{2}\right) \left( K-k\right)
			\left( s\right) -\left( 1+c^{2}\right) K(s)\right] }{\epsilon \delta \left(
			1+c^{2}\right) \sqrt{1+d^{2}}\left( 1-aK(s)\right) }, \\
		\bar{r}(\bar{s}) &=&-\frac{c\left \vert \left( c\left( 1+d^{2}\right) \left(
			K-k\right) \left( s\right) -\left( 1+c^{2}\right) dK(s)\right) \right \vert 
		}{\epsilon \left( 1+c^{2}\right) \sqrt{1+d^{2}}\left( 1-aK(s)\right) }.
	\end{eqnarray*}
\end{corollary}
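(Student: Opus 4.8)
The plan is to read off the curvatures of the spatial quaternionic curve $\beta$ directly from the curvature functions of $\beta^{\left( 4\right) }$ that have already been computed in Theorem \ref{cor 4.1}, using the dictionary recorded in Theorem \ref{teo 2.2 copy(1)} between the Frenet data of a quaternionic curve in $\mathbb{R}^{4}$ and the Frenet data of its associated spatial quaternionic curve in $\mathbb{R}^{3}$.

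First I would recall that, applying Theorem \ref{teo 2.2 copy(1)} to the quaternionic curve $\beta^{\left( 4\right) }$, its principal curvature is $\bar{K}$, its torsion is $-\bar{r}$ and its bitorsion is $\bar{K}-\bar{k}$, where $\bar{k}$ and $\bar{r}$ denote respectively the principal curvature and the torsion of the spatial quaternionic curve $\beta$ associated with $\beta^{\left( 4\right) }$. Consequently $\bar{k}=\bar{K}-(\bar{K}-\bar{k})$, so $\bar{k}$ is obtained by subtracting the bitorsion expression from the principal-curvature expression of $\beta^{\left( 4\right) }$, while $\bar{r}$ is obtained simply by negating the torsion expression $-\bar{r}$ of $\beta^{\left( 4\right) }$.

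Next I would substitute the relevant formulas from Theorem \ref{cor 4.1}, namely $\bar{K}(\bar{s})=\frac{c\sqrt{1+d^{2}}\left( K-k\right) \left( s\right) }{\epsilon \delta \left( 1+c^{2}\right) \left( 1-aK(s)\right) }$ and $\left( \bar{K}-\bar{k}\right) (\bar{s})=\frac{cK\left( s\right) }{\epsilon \delta \sqrt{1+d^{2}}\left( 1-aK(s)\right) }$, bring them over the common denominator $\epsilon \delta \left( 1+c^{2}\right) \sqrt{1+d^{2}}\left( 1-aK(s)\right) $, and collect the numerator as $c\left[ \left( 1+d^{2}\right) \left( K-k\right) \left( s\right) -\left( 1+c^{2}\right) K(s)\right] $; this is exactly the asserted expression for $\bar{k}(\bar{s})$. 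Likewise, negating the torsion formula $-\bar{r}(\bar{s})=\frac{c\left \vert c\left( 1+d^{2}\right) \left( K-k\right) \left( s\right) -\left( 1+c^{2}\right) dK(s)\right \vert }{\epsilon \left( 1+c^{2}\right) \sqrt{1+d^{2}}\left( 1-aK(s)\right) }$ from Theorem \ref{cor 4.1} immediately yields the asserted expression for $\bar{r}(\bar{s})$.

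The only point that requires care --- and the closest thing to an obstacle --- is the bookkeeping of the orientation constants $\epsilon $ and $\delta $ together with the sign conventions fixed in Theorem \ref{teo 2.2 copy(1)} (torsion $=-r$, bitorsion $=K-k$) and reused in the proof of Theorem \ref{cor 4.1}; once these are pinned down consistently, the rest is a one-line algebraic simplification and no genuine difficulty remains.
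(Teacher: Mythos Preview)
Your proposal is correct and is precisely the approach the paper takes: the paper's own proof is a single line stating that the corollary is obvious from Theorem~\ref{cor 4.1}, and you have simply spelled out that obvious step by computing $\bar{k}=\bar{K}-(\bar{K}-\bar{k})$ and negating $-\bar{r}$ from the formulas there.
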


\begin{proof}
	It is obvious from Theorem (5).
\end{proof}

\end{document}